\numberwithin{equation}{section}
  \newtheorem{theorem}{Theorem}[section]
  \newtheorem{lemma}[theorem]{Lemma}
  \newtheorem{corollary}[theorem]{Corollary}
  \newtheorem{remark}[theorem]{Remark}
  \newtheorem{definition}[theorem]{Definition}
  \newtheorem{example}[theorem]{Example}
\title[New classes of null hypersurfaces]{New classes of null hypersurfaces in indefinite Sasakian space-forms}
\author[Samuel Ssekajja]{Samuel Ssekajja*}
\newcommand{\acr}{\newline\indent}
\address{\llap{*\,} School of Mathematics\acr
 University of the Witwatersrand\acr
 Private Bag 3, Wits 2050\acr
South Africa}
\email{samuel.ssekajja@wits.ac.za} 
\thanks{}
\subjclass[2010]{Primary 53C25; Secondary 53C40, 53C50}
\keywords{Null hypersurfaces, Contact screen conformal hypersurfaces, Contact screen umbilic null hypersurfaces}
\begin{document}
\begin{abstract}
We introduce two classes of null hypersurfaces of an indefinite Sasakian manifold, $(\overline{M}, \overline{\phi},\zeta, \eta)$, tangent to the characteristic vector field $\zeta$, called; {\it contact screen conformal} and {\it contact screen umbilic} null hypersurfaces. These hypersurfaces come in to fill the existing gap in screen conformal and screen totally umbilic null hypersurfaces. We prove that such hypersurfaces are contained in indefinite Sasakian space forms of constant $\overline{\phi}$-sectional curvature of $-3$.
\end{abstract}
\maketitle
\section{Introduction} 

The theory of non-degenerate submanifolds \cite{bla1, bla2, oneil} of Riemannian or semi-Riemannian manifolds is one of the most important topics of differential geometry. But the theory of null submanifolds of semi-Riemannian manifolds is relatively new and in a developing stage, with a lot of applications to mathematical physics (general relativity and electromagnetism). The geometry of null submanifolds becomes more difficult and is completely different from that of non-degenerate submanifolds. Such difficulty stems from a non-trivial intersection of the normal bundle of a null submanifold and its tangent bundle. In 1996, Duggal and Bejancu published their work \cite{db} on null submanifolds of semi-Riemannian manifolds and indefinite Kaehler manifolds. This was later, in 2009, updated to \cite{ds2} by Duggal and Sahin, to include the geometry of indefinite almost contact manifolds. In particular, the geometry of null submanifolds of indefinite Sasakian manifolds is in Chapter 7. 

Recently several authors have studied the geometry of null submanifolds of indefinite Sasakian manifolds \cite{Jin, Jin11, kang, massamba1, massamba2}. In particular, \cite{Jin11} proves that there exist no null hypersurfaces of an indefinite Sasakian space form of constant $\overline{\phi}$-sectional curvature different from $1$ (see Theorem 3.3 and Corollary 1). On the other hand, \cite{kang, massamba1, massamba2} considers totally umbilic, screen totally umbilic, $\eta$-totally umbilic, screen conformal and invariant null hypersurfaces in indefinite Sasakian manifolds. In this paper, we prove that all the above null hypersurfaces are non-existent in indefinite Sasakian manifolds, (see Theorem \ref{main1}), if they are assumed to be tangent to the structure vector field, and that there exist other null hypersurfaces in indefinite Sasakian space forms of constant $\overline{\phi}$-sectional curvature different from $1$ (see Theorems \ref{main2} and \ref{main3}).

The rest of the paper is arranged as follows; in Section \ref{pre} we quote the basic notions required in the rest of the paper. In Section \ref{nonexi}, we prove a non-existence result of null hypersurfaces, tangent to the structure vector field. Finally, in Section \ref{newclass}, we introduce two classes of null hypersurfaces in indefinite Sasakian manifolds.

\section{Preliminaries} \label{pre}

A $(2n+1)$-dimensional differentiable manifold $\overline{M}$ is said to have an almost contact structure $(\overline{\phi},\zeta, \eta)$ if it admits a vector field $\zeta$, a 1-form $\eta$ and a field $\overline{\phi}$ of endomorphism of the tangent vector space satisfying
\begin{align}\label{p1}
	\overline{\phi}^{2}=-I+\eta \otimes \zeta,\;\;\;\eta(\zeta)=1.
\end{align}
It follows that $\overline{\phi}\zeta=0$, $\eta \circ \overline{\phi}=0$ and $\mathrm {rank}(\overline{\phi})=2n$. Then, the manifold $\overline{M}$, with a $(\overline{\phi},\zeta, \eta)$-structure is called an {\it almost contact manifold} \cite{bla1}. From the point of view of differential geometry it is desirable to define a metric on a paracompact manifold $\overline{M}$. We say that a semi-Riemannian metric $\overline{g}$ is an associated metric of an almost contact structure $(\overline{\phi},\zeta, \eta)$ of $\overline{M}$ if 
\begin{align}\label{p2}
	\overline{g}(\overline{\phi}X,\overline{\phi}Y)=\overline{g}(X,Y)-\eta(X)\eta(Y),\;\;\;\forall\, X,Y\in \Gamma(T\overline{M}).
\end{align}
Here, and in the rest of this paper, $\Gamma(\Xi)$ denotes the smooth sections of a vector bundle $\Xi$. Then, $(\overline{M},\overline{g})$, with $\overline{g}$ satisfying (\ref{p2}), is called an {\it almost contact metric manifold} with a $(\overline{\phi},\zeta, \eta)$-structure and $\overline{g}$ is called its compatible (or associated) metric, whose fundamental 2-form $\Omega$ is defined by $\Omega(X,Y)=\overline{g}(X,\overline{\phi}Y)$, for any $X,Y\in\Gamma(T\overline{M})$. Note that Since $\Omega$, also satisfies $\eta\wedge \Omega^{n}$. A $(2n+1)$-dimensional manifold $\overline{M}$ is called a contact manifold if there exists a 1-form $\eta$ on $\overline{M}$ such that $\eta\wedge (d\eta^{n})\ne 0$ everywhere. When $\Omega=d\eta$ (i.e., $\Omega$ is closed), then, $(\overline{M},\overline{\phi},\zeta, \eta)$ is called a {\it contact metric manifold} \cite{bla2}. These metrics can be constructed by the polarization of $\Omega=d\eta$ evaluated on a local orthonormal basis of the tangent space with respect to an arbitrary metric, on the contact subbundle $\mathcal{D}$, such that $T\overline{M}=\mathcal{D}\perp \mathbb{R}\zeta$. Here, and in the rest of this paper,  $\mathbb{R}\zeta$ denotes the line bundle spanned by $\zeta$. Thus, a contact metric manifold is an analogue of an almost Kaehler manifold, in odd dimensions. The almost contact structure $(\overline{\phi},\zeta, \eta)$ on $\overline{M}$ is said to be {\it normal} if $\overline{\phi}$ is integrable. It is known (see Blair \cite{bla2} for details) that $\overline{M}$ has a {\it normal contact structure} if $N_{\overline{\phi}}+2d\eta \otimes \zeta=0$, where $N_{\overline{\phi}}=[\overline{\phi},\overline{\phi}]$  is the Nijenhuis tensor field of $\overline{\phi}$. A normal contact metric manifold is called a {\it Sasakian manifold}. It is well-known \cite{bla2}  that an almost contact metric manifold $(\overline{M},\overline{g})$ is Sasakian if and only if 
\begin{align}\label{k1}
	(\overline{\nabla}_{X}\overline{\phi})Y=\overline{g}(X,Y)\zeta-\eta(Y)X,\;\;\;\;\forall\,X,Y\in \Gamma(T\overline{M}),
\end{align}
where $\overline{\nabla}$ is the metric connection on $\overline{M}$. Replacing $Y$ by $\zeta$ in (\ref{k1}), and using (\ref{p1}), we get 
\begin{align}\label{p4}
	\overline{\nabla}_{X}\zeta=-\overline{\phi}X,\;\;\;\,\forall\, X\in \Gamma(T\overline{M}).
\end{align}

A plane section $\pi$ in $T_{x}\overline{M}$ of a Sasakian manifold $\overline{M}$ is called a $\overline{\phi}$-section if it is spanned by a unit vector $X$ orthogonal to $\zeta$ and $\overline{\phi}X$, where $X$ is a non-null vector field on $\overline{M}$. The sectional curvature $K(X,\overline{\phi}X)$ of a $\overline{\phi}$-section is called a $\overline{\phi}$-sectional curvature. If $\overline{M}$ has a $\overline{\phi}$-sectional curvature $c$ which does not depend on the $\overline{\phi}$-section at each point, then, $c$ is constant in $\overline{M}$ and $\overline{M}$ is called a {\it Sasakian space form}, denoted by $\overline{M}(c)$. Moreover, the curvature tensor $\overline{R}$ of $\overline{M}$ satisfies (see \cite{bla2})
\begin{align}\label{p25}
	\overline{R}(X,Y)Z=&\frac{(c+3)}{4}\{ \overline{g}(Y,Z)X-\overline{g}(X,Z)Y\}+\frac{(c-1)}{4}\{ \eta(X)\eta(Z)Y \nonumber\\
	        &-\eta(Y)\eta(Z)X+\overline{g} (X,Z)\eta(Y)\zeta- \overline{g} (Y,Z)\eta(X)\zeta\nonumber\\
	        &+\overline{g}(\overline{\phi}Y,Z)\overline{\phi}X-\overline{g}(\overline{\phi}X,Z)\overline{\phi}Y-2\overline{g}(\overline{\phi}X,Y)\overline{\phi}Z\},
	        \end{align}
for all $X,Y,Z\in \Gamma(T\overline{M}).$

Let $(\overline{M},\overline{g})$ be a $(2n+1)$-dimensional semi-Riemannian manifold with index $q$, where  $0< q < 2n+1$, and let $(M,g)$ be a hypersurface of $\overline{M}$. Let $g$ be the induced tensor field by $\overline{g}$ on $M$. Then, $M$ is called a \textit{null hypersurface} of $\overline{M}$ if $g$ is of constant rank $2n-1$ and the normal bundle $TM^{\perp}$ is a distribution of rank 1 on $M$ \cite{db}. Here,  the fibers the  vector bundle $TM^{\perp}$ are defined as $T_{x}M^{\perp}=\{Y_{x}\in T_{x}\overline{M}:\overline{g}_{x}(X_{x},Y_{x})=0,\;\;\forall\, X_{x}\in T_{x}M\}$, for any $x\in M$.  Let $M$  be a null hypersurface, we consider the complementary distribution $S(TM)$ to $TM^{\perp}$ in $TM$, which is called a \textit{screen distribution}. It is well-known that $S(TM)$ is non-degenerate (see \cite{db}). Thus,  $TM=S(TM)\perp TM^{\perp}$. As $S(TM)$ is non-degenerate with respect to $\overline{g}$, we have $T\overline{M}=S(TM)\perp S(TM)^{\perp}$,
where $S(TM)^{\perp}$ is the complementary vector bundle to $S(TM)$ in $T\overline{M}|_{M}$. Let $(M,g)$ be a null hypersurface of $(\overline{M},\overline{g})$ . Then, there exists a unique vector bundle $tr(TM)$, called the \textit{null transversal bundle} \cite{db} of $M$ with respect to $S(TM)$,  of rank 1 over $M$ such that for any non-zero section $E$ of $TM^{\perp}$ on a coordinate neighborhood $\mathcal{U}\subset M$, there exists a unique section $N$ of $tr(TM)$ on $\mathcal{U}$ satisfying 
\begin{align}\label{p5}
	\overline{g}(\xi,N)=1,\;\overline{g}(N,N)=\overline{g}(N,Z)=0,\;\;\;\forall\, Z\in \Gamma(S(TM)).
\end{align}
 Consequently, we have the following decomposition of $T\overline{M}$.  
\begin{align}\nonumber
	T\overline{M}|_{M}=S(TM)\perp \{TM^{\perp}\oplus tr(TM)\}=TM\oplus tr(TM).
\end{align}
  Let $\nabla$ and $\nabla^{*}$ denote the induced connections on $M$ and $S(TM)$, respectively, and $P$ be the projection of $TM$ onto $S(TM)$, then the local Gauss-Weingarten equations of $M$ and $S(TM)$ are the following \cite{db}.
\begin{align}
 &\overline{\nabla}_{X}Y=\nabla_{X}Y+h(X,Y)=\nabla_{X}Y+B(X,Y)N,\label{int1}\\
 &\overline{\nabla}_{X}N=-A_{N}X+\nabla_{X}^{t}N=-A_{N}X+\tau(X)N,\label{flow6}\\
  &\nabla_{X}PY=\nabla^{*}_{X}PY+h^{*}(X,PY)= \nabla^{*}_{X}PY + C(X,PY)\xi,\label{int3}\\
  &\nabla_{X}\xi =-A^{*}_{\xi}X+\nabla_{X}^{*t}\xi=-A^{*}_{\xi}X -\tau(X) \xi,\;\;\; A_{\xi}^{*}\xi=0,\label{flow7}
 \end{align}
 for all $X,Y\in\Gamma(TM)$, $\xi\in\Gamma(TM^{\perp})$ and $N\in\Gamma(tr(T M))$, where $\overline{\nabla}$ is the Levi-Civita connection on $\overline{M}$. In the above setting, $B$ is the local second fundamental form of $M$ and $C$  is the local second fundamental form on $S(TM)$.  $A_{N}$ and $A^{*}_{\xi}$ are the shape operators on $TM$ and $S(TM)$ respectively, while $\tau$ is a 1-form on $TM$. The above shape operators are related to their local fundamental forms by 
 \begin{align}\label{p9}
 	g(A^{*}_{\xi}X,Y) =B(X,Y)\;\;\;\mbox{and}\;\;\;g(A_{N}X,PY) = C(X,PY), 
 \end{align}
 for any $X,Y\in \Gamma(TM)$. It follows from (\ref{flow7}) and (\ref{p9}) that 
 \begin{align}\label{p10}
 	B(X,\xi)=0,\;\;\;\;\forall\, X\in \Gamma(TM).
 \end{align}
 Moreover, we have
 \begin{align}\label{p11}
 	\overline{g}(A^{*}_{\xi}X,N)=0\;\;\;\mbox{and}\;\;\;\overline{g}(A_{N} X,N) = 0, 
 \end{align}
 for all $ X\in\Gamma(TM)$. From  relations (\ref{p11}), we notice that $A_{\xi}^{*}$ and $A_{N}$ are both screen-valued operators.  
 
 Let $\vartheta=\overline{g}(N,\boldsymbol{\cdot})$ be a 1-form metrically equivalent to $N$ defined on $\overline{M}$. Take $\theta=i^{*}\vartheta$
to be its restriction on $M$, where $i:M\rightarrow \overline{M}$ is the inclusion map. Then it is easy to show that 
\begin{align}\label{p40}
	(\nabla_{X}g)(Y,Z)=B(X,Y)\theta(Z)+B(X,Z)\theta(Y), 
\end{align}
for all $X,Y,Z\in \Gamma(TM)$. Consequently,  $\nabla$ is generally \textit {not} a metric connection with respect to $g$. However, the induced connection $\nabla^{*}$ on $S(TM)$ is a metric connection. Denote by $\overline{R}$ and  $R$ the curvature tensors of the connection $\overline{\nabla}$ on $\overline{M}$ and the induced linear connections $\nabla$, respectively. Using the Gauss-Weingarten formulae, we obtain the following  Gauss-Codazzi equations for $M$ and $S(TM)$ (see details in \cite{db,ds2}).
\begin{align}
\overline{g}(\overline{R}(X,Y)Z,\xi)=&(\nabla_{X}B)(Y,Z)-(\nabla_{Y}B)(X,Z)+\tau(X)B(Y,Z)\nonumber\\
&-\tau(Y)B(X,Z),\label{v34}\\
\overline{g}(\overline{R}(X,Y)PZ,N)=&(\nabla_{X}C)(Y,PZ)-(\nabla_{Y}C)(X,PZ)-\tau(X)C(Y,PZ)\nonumber\\
&+\tau(Y)C(X,PZ),\label{v35}
\end{align}
for all $X,Y,Z\in \Gamma(TM)$, $\xi\in \Gamma(TM^{\perp})$ and $N\in \Gamma(tr(TM))$, where $\nabla B$ and $\nabla C$ are defined as follows;
\begin{align}\label{p100}
	(\nabla_{X}B)(Y,Z)=XB(Y,Z)-B(\nabla_{X}Y,Z)-B(Y,\nabla_{X}Z),
\end{align}
 and 
 \begin{align}\label{p101}
	(\nabla_{X}C)(Y,PZ)=XC(Y,PZ)-C(\nabla_{X}Y,PZ)-C(Y,\nabla^{*}_{X}PZ).
\end{align}	
	A null hypersurface $(M,g)$ of a semi-Riemannian manifold $(\overline{M},\overline{g})$ is called \textit{screen conformal} \cite[p. 51]{db} if there exist a non-vanishing smooth function $\psi$ on a neighborhood $\mathcal{U}$ in $M$ such that $A_{N}=\psi A^{*}_{\xi}$, or equivalently, 
	\begin{align}\label{p6}
		C(X,PY)=\psi B(X,PY),
	\end{align}
 for all $X,Y\in \Gamma(TM)$. We say that $M$ is \textit{screen homothetic} if $\psi$ is a constant function on $M$. The null hypersurface $M$ is said to be \textit{totally umbilic} \cite{db} if 
 \begin{align}\label{p7}
 	 B(X,Y)=\rho g(X,Y),
 \end{align}
where $\rho$ is a smooth function on a coordinate neighborhood  $\mathcal{U}\subset TM$. In case $\rho=0$, we say that $M$ is \textit{totally geodesic}. In the same line, $M$ is called \textit{screen totally umbilic} if 
\begin{align}\label{p8}
C(X,PY)=\varrho g(X,PY),	
\end{align}
where $\varrho$ is a smooth function on a coordinate neighborhood $\mathcal{U}\subset TM$. When $\varrho=0$, we say that $M$ is \textit{screen totally geodesic}. 
	
\section{A nonexistance result}\label{nonexi}
Let $(M,g)$ be a null hypersurface of an indefinite Sasakian manifold $(\overline{M}, \overline{\phi},\zeta, \eta)$, which is {\it tangent} to the structure vector field $\zeta$. That is; $\zeta\in \Gamma(TM)$. In such a case, $\zeta$ belongs to $S(TM)$ \cite{calin}. As seen in the previous section, let $\xi$ and $N$ the metric normal and the transversal sections, respectively. Since $(\overline{\phi},\zeta, \eta))$ is an almost contact structure and $\overline{\phi}\xi$ is a null vector field, it follows that $\overline{\phi}N$ is null too. Moreover, $\overline{g}(\overline{\phi}\xi,\xi)=0$ and, thus, $\overline{\phi}\xi$ is tangent to $TM$. Let us consider $S(TM)$ containing $\overline{\phi}TM^{\perp}$ as a vector subbundle. Consequently, $N$ is orthogonal to $\overline{\phi}\xi$ and we have $\overline{g}(\overline{\phi}N,\xi)=-\overline{g}(N,\overline{\phi}\xi)=0$ and $\overline{g}(\overline{\phi}N,N)=0$. This means that $\overline{\phi}N$ is tangent to $TM$ and in particular, it belongs to $S(TM)$. Thus, $\overline{\phi}tr(TM)$ is also a vector subbundle of $S(TM)$. In view of (\ref{p2}), we have 
$\overline{g}(\overline{\phi}\xi,\overline{\phi}N)=1$.

It is then easy to see that $\overline{\phi}TM^{\perp}\oplus \overline{\phi}tr(TM)$ is a non-degenerate vector subbbundle of $S(TM)$, with 2-dimensional fibers. Since $\zeta$ is tangent to $M$, and that $\overline{g}(\overline{\phi}E,\zeta)=\overline{g}(\overline{\phi}N,\zeta)=0$, then there exists a non-degenerate distribution $D_{0}$ on $TM$ such that 
\begin{align}\label{p13}
	S(TM)=\{\overline{\phi}TM^{\perp}\oplus \overline{\phi}tr(TM)\}\perp D_{0}\perp \mathbb{R}\zeta.
\end{align}
Here, $\mathbb{R}\zeta$ denotes the line bundle spanned by $\zeta$. It is easy to check that $D_{0}$ is an almost complex distribution with respect to $\overline{\phi}$, i.e., $\overline{\phi}D_{0}=D_{0}$. The decomposition of $TM$ becomes 
$TM=\{\overline{\phi}TM^{\perp}\oplus \overline{\phi}tr(TM)\}\perp D_{0}\perp \mathbb{R}\zeta\perp TM^{\perp}$. If we set $D:=TM^{\perp}\perp \overline{\phi}TM^{\perp}\perp D_{0}$ and $D'=\overline{\phi}tr(TM)$, then 
\begin{align}\label{p14}
	TM=D\oplus D'\perp \mathbb{R}\zeta.
\end{align} 
Here, $D$ is an almost complex distribution and $D'$ is carried by $\overline{\phi}$ just into the transversal bundle. Let us set 
\begin{align}\label{f11}
	U:=-\overline{\phi}N\;\;\; \mbox{and}\;\;\;V:=-\overline{\phi}\xi. 
\end{align}
Then, from (\ref{p14}), any $X\in \Gamma(TM)$ is written as $X=RX+QX+\eta(X)\zeta$ and $QX=u(X)U$, where $R$ and $Q$ are the projection morphisms of $TM$ onto $D$ and $D'$, respectively, and $u$ is a differential 1-form defined on $M$ by 
\begin{align}\label{p16}
	u(X)=g(V,X),\;\;\;\forall\,X\in \Gamma(TM).
\end{align}
Applying $\overline{\phi}$ to $X$ and using (\ref{p1}), we get 
\begin{align}\label{p17}
	\overline{\phi}X=\phi X+u(X)N,
\end{align}
where $\phi$ is a (1,1) tensor field defined on $M$ by $\phi X=\overline{\phi}RX$. Furthemore, we have 
\begin{align}\label{p18}
	\phi^{2}X=-I+\eta(X)\zeta+u(X)U,\;\;\;\;u(U)=1,\;\;\;\forall\, X\in \Gamma(TM).
\end{align}
It is easy to show that 
\begin{align}
	g(\phi X,\phi Y)=g(X,Y)\eta(X)\eta(Y)-u(Y)v(X)-u(X)v(Y),
\end{align}
where $v$ is a 1-form locally defined on $M$ by 
\begin{align}\label{p26}
	v(X)=g(U,X),\;\;\;\forall\, X\in \Gamma(TM).
\end{align}
Note that 
\begin{align}\label{p30}
	g(\phi X,Y)=-g(X,\phi Y)-u(X)\theta(Y)-u(Y)\theta(X),
\end{align}
for all $X,Y\in \Gamma(TM)$.
\begin{lemma}\label{lemma2}
	On a null hypersurface $(M,g)$ of an indefinite Sasakian manifold $(\overline{M}(c), \overline{\phi},\zeta, \eta)$, tangent to $\zeta$, the following important relations holds.
\begin{align}
	\nabla_{X}\zeta &=-\phi X,\label{p19}\\
	B(X,\zeta)&=-u(X),\label{p20}\\
	C(X,\zeta)&=-v(X),\label{p21}\\
	B(X,U)&=C(X,V),\label{p22}\\
	(\nabla_{X}\phi)Y&=g(X,Y)\zeta-\eta(Y)X-B(X,Y)U+u(Y)A_{N}X,\label{p23}
\end{align}
for all $X,Y\in \Gamma(TM)$.
\end{lemma}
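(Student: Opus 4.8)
\emph{Approach.} The plan is to produce each identity by feeding $\overline{\phi}$ (or the vector $\zeta$) into the ambient Sasakian identities (\ref{k1}) and (\ref{p4}), and then reading off the $TM$- and $tr(TM)$-components via the Gauss--Weingarten equations (\ref{int1})--(\ref{flow7}). Throughout I will use the splitting (\ref{p17}), the definitions $U=-\overline{\phi}N$, $V=-\overline{\phi}\xi$ of (\ref{f11}), the $\overline{\phi}$-skew-symmetry $\overline{g}(\overline{\phi}X,Y)=-\overline{g}(X,\overline{\phi}Y)$ (a consequence of (\ref{p1})--(\ref{p2})), and the orthogonality facts $\eta(N)=\overline{g}(N,\zeta)=0$, $\overline{g}(W,\xi)=0$ and $\overline{g}(W,\zeta)=\eta(W)$ for $W\in\Gamma(TM)$, and $\overline{g}(U,\xi)=0$ (since $U\in\Gamma(S(TM))$), all immediate from (\ref{p5}) and (\ref{p13}).

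\emph{Identities (\ref{p19})--(\ref{p21}).} By (\ref{p4}), $\overline{\nabla}_X\zeta=-\overline{\phi}X$. Expanding the left side with (\ref{int1}) (valid since $\zeta\in\Gamma(S(TM))$) gives $\nabla_X\zeta+B(X,\zeta)N$, while the right side is $-\phi X-u(X)N$ by (\ref{p17}). Comparing $TM$-components gives (\ref{p19}) and comparing $tr(TM)$-components gives (\ref{p20}). For (\ref{p21}), I would put $PY=\zeta$ in (\ref{int3}) to get $\nabla_X\zeta=\nabla^{*}_X\zeta+C(X,\zeta)\xi$; pairing with $N$, using (\ref{p5}) and (\ref{p19}), yields $C(X,\zeta)=-\overline{g}(\phi X,N)$, and by (\ref{p17}) together with the skew-symmetry of $\overline{\phi}$, $\overline{g}(\phi X,N)=\overline{g}(\overline{\phi}X,N)=-\overline{g}(X,\overline{\phi}N)=\overline{g}(X,U)=v(X)$, whence (\ref{p21}).

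\emph{Identity (\ref{p22}).} Projecting (\ref{int1}) onto $\xi$ shows $B(X,Y)=\overline{g}(\overline{\nabla}_XY,\xi)$ for $X,Y\in\Gamma(TM)$, since $\overline{g}(\nabla_XY,\xi)=0$. Apply this with $Y=U$. Using (\ref{k1}) with $\eta(N)=0$ gives $(\overline{\nabla}_X\overline{\phi})N=\theta(X)\zeta$, while (\ref{flow6}) gives $\overline{\phi}(\overline{\nabla}_XN)=-\overline{\phi}(A_NX)-\tau(X)U$, so
\[
\overline{\nabla}_XU=-(\overline{\nabla}_X\overline{\phi})N-\overline{\phi}(\overline{\nabla}_XN)=-\theta(X)\zeta+\overline{\phi}(A_NX)+\tau(X)U .
\]
Pairing with $\xi$ kills the first and third terms, leaving $B(X,U)=\overline{g}(\overline{\phi}(A_NX),\xi)=-\overline{g}(A_NX,\overline{\phi}\xi)=\overline{g}(A_NX,V)$. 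Since $A_N$ is screen-valued by (\ref{p11}) and $V\in\Gamma(S(TM))$, this equals $g(A_NX,PV)=C(X,V)$ by (\ref{p9}).

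\emph{Identity (\ref{p23}) and the main point.} Differentiate (\ref{p17}). On one hand, $\overline{\nabla}_X(\overline{\phi}Y)=(\overline{\nabla}_X\overline{\phi})Y+\overline{\phi}(\overline{\nabla}_XY)$, which by (\ref{k1}), (\ref{int1}), a second use of (\ref{p17}) on $\nabla_XY$, and $\overline{\phi}N=-U$, equals $g(X,Y)\zeta-\eta(Y)X+\phi(\nabla_XY)+u(\nabla_XY)N-B(X,Y)U$. On the other hand, writing $\overline{\phi}Y=\phi Y+u(Y)N$ and using (\ref{int1}) and (\ref{flow6}), $\overline{\nabla}_X(\overline{\phi}Y)=\nabla_X(\phi Y)-u(Y)A_NX+(\,\cdots)N$. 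Equating the $TM$-parts gives exactly (\ref{p23}); the $tr(TM)$-parts give an auxiliary formula for $\nabla u$ that is not needed. I do not expect a genuine obstacle here: the whole lemma is a bookkeeping exercise in decomposition along $TM\oplus tr(TM)$, and the only care needed is with the signs in $U=-\overline{\phi}N$, $V=-\overline{\phi}\xi$, the $\overline{\phi}$-skew-symmetry of $\overline{g}$, and the handful of orthogonality relations that make the spurious terms vanish.
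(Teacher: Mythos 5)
Your proposal is correct, and it is exactly the computation the paper has in mind: the paper's proof is only the remark that the lemma follows by straightforward calculation from (\ref{k1}), (\ref{p4}) and (\ref{int1})--(\ref{flow7}), and your argument carries this out faithfully (decomposing via (\ref{p17}) and (\ref{f11}) and comparing tangential and transversal components), with all signs and orthogonality facts handled correctly.
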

\begin{proof}
	A proof uses straightforward calculations, while considering (\ref{k1}), (\ref{p4}) and (\ref{int1})--(\ref{flow7}).
\end{proof}

 \noindent We also have the following lemma.
\begin{lemma}\label{lema1}
	The vector fields $U$ and $V$ in (\ref{f11}) satisfies the relations
	\begin{align}
		\nabla_{X}U&=\phi A_{N}X-\theta(X)\zeta+\tau(X)U,\label{p38}\\
		\mbox{and}\;\;\;\;\nabla_{X}V&=\phi A^{*}_{\xi}X-\tau(X)V,\;\;\;\forall\,X\in \Gamma(TM).\label{p39}
	\end{align}
\end{lemma}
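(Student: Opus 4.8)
The plan is to differentiate the defining identities $U=-\overline{\phi}N$ and $V=-\overline{\phi}\xi$ of (\ref{f11}) with the ambient Levi-Civita connection $\overline{\nabla}$, use the Sasakian condition (\ref{k1}) together with the Gauss--Weingarten equations, and then read off the tangential components. The transversal components will come out automatically and serve only as a consistency check against Lemma \ref{lemma2}.

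For the first relation, write $\overline{\nabla}_{X}U=-(\overline{\nabla}_{X}\overline{\phi})N-\overline{\phi}(\overline{\nabla}_{X}N)$ for $X\in\Gamma(TM)$. By (\ref{k1}), $(\overline{\nabla}_{X}\overline{\phi})N=\overline{g}(X,N)\zeta-\eta(N)X$; since $X$ is tangent to $M$ one has $\overline{g}(X,N)=\theta(X)$, and since $\zeta\in\Gamma(S(TM))$ is $\overline{g}$-orthogonal to $N$ by (\ref{p5}) one has $\eta(N)=\overline{g}(\zeta,N)=0$, so this term equals $\theta(X)\zeta$. For the second term, the Weingarten equation (\ref{flow6}) gives $\overline{\nabla}_{X}N=-A_{N}X+\tau(X)N$; applying $\overline{\phi}$, using $\overline{\phi}N=-U$, and decomposing $\overline{\phi}(A_{N}X)$ via (\ref{p17}), one gets $\overline{\phi}(\overline{\nabla}_{X}N)=-\phi A_{N}X-u(A_{N}X)N-\tau(X)U$. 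Combining, $\overline{\nabla}_{X}U=\phi A_{N}X-\theta(X)\zeta+\tau(X)U+u(A_{N}X)N$, and comparison with the Gauss formula (\ref{int1}) identifies the $\Gamma(TM)$-component as (\ref{p38}), the transversal component giving $B(X,U)=u(A_{N}X)$ in agreement with (\ref{p22}).

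The second relation is obtained in the same way from $V=-\overline{\phi}\xi$: one has $\overline{\nabla}_{X}V=-(\overline{\nabla}_{X}\overline{\phi})\xi-\overline{\phi}(\overline{\nabla}_{X}\xi)$, and here $(\overline{\nabla}_{X}\overline{\phi})\xi=\overline{g}(X,\xi)\zeta-\eta(\xi)X=0$, since $X\in\Gamma(TM)$ is orthogonal to $\xi\in\Gamma(TM^{\perp})$ and $\eta(\xi)=\overline{g}(\zeta,\xi)=0$. Next, (\ref{p10}) together with (\ref{int1}) gives $\overline{\nabla}_{X}\xi=\nabla_{X}\xi$, which by (\ref{flow7}) equals $-A^{*}_{\xi}X-\tau(X)\xi$; applying $\overline{\phi}$, using $\overline{\phi}\xi=-V$ and (\ref{p17}) on $A^{*}_{\xi}X$, one finds $\overline{\phi}(\overline{\nabla}_{X}\xi)=-\phi A^{*}_{\xi}X-u(A^{*}_{\xi}X)N+\tau(X)V$. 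Hence $\overline{\nabla}_{X}V=\phi A^{*}_{\xi}X-\tau(X)V+u(A^{*}_{\xi}X)N$, and the tangential part is precisely (\ref{p39}).

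There is no genuine obstacle here — it is a direct computation of the type already invoked in Lemma \ref{lemma2}. The only point requiring care is the bookkeeping inside the decomposition $T\overline{M}|_{M}=TM\oplus tr(TM)$: one must evaluate $\eta(N)$, $\eta(\xi)$, $\overline{g}(X,\xi)$ and $\overline{g}(X,N)$ correctly from (\ref{p5}) and the standing hypothesis $\zeta\in\Gamma(S(TM))$, keep track of which object is genuinely $\overline{\nabla}$-differentiated versus $\nabla$-differentiated (this is what makes (\ref{p10}) enter the argument for $V$), and apply the splitting (\ref{p17}) of $\overline{\phi}$ to $A_{N}X$ and $A^{*}_{\xi}X$, which lie in $\Gamma(TM)$ by (\ref{p11}).
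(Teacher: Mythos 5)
Your proof is correct, but it follows a different route from the paper's. You differentiate the ambient identities $U=-\overline{\phi}N$ and $V=-\overline{\phi}\xi$ directly with $\overline{\nabla}$, invoke the Sasakian condition (\ref{k1}) on $N$ and $\xi$ (with the bookkeeping $\eta(N)=\eta(\xi)=0$, $\overline{g}(X,N)=\theta(X)$, $\overline{g}(X,\xi)=0$), insert the Weingarten formulas (\ref{flow6}) and (\ref{flow7}) (the latter after using (\ref{p10}) to pass from $\overline{\nabla}_X\xi$ to $\nabla_X\xi$), split $\overline{\phi}(A_NX)$ and $\overline{\phi}(A^{*}_{\xi}X)$ via (\ref{p17}), and read off the tangential parts through the Gauss formula (\ref{int1}); as a bonus the transversal component reproduces $B(X,U)=u(A_NX)=C(X,V)$, i.e.\ (\ref{p22}). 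The paper instead argues intrinsically: it puts $Y=U$ (resp.\ $Y=V$) in the induced formula (\ref{p23}) of Lemma \ref{lemma2}, uses $\phi U=0$ and $\phi\zeta=0$, applies $\phi$ once more and exploits (\ref{p18}) to isolate $\nabla_XU$ up to its $\zeta$- and $U$-components, which are then computed separately by the ambient identities $\eta(\nabla_XU)=-\theta(X)$ and $u(\nabla_XU)=\tau(X)$. Your approach is the more direct one-pass computation and needs no separate evaluation of those components, while the paper's has the advantage of staying within the induced structure $(\phi,u,v,\eta,\theta)$ on $M$ and reusing the machinery already established in Lemma \ref{lemma2}; both are complete and yield the same identities (\ref{p38})--(\ref{p39}).
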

\begin{proof}
Letting $Y=U$ in (\ref{p23})	 and using the fact $\phi U=0$, we get 
\begin{align}\label{p51}
	-\phi \nabla_{X}U=v(X)\zeta-B(X,U)U+A_{N}X,
\end{align}
for all $X\in \Gamma(TM)$. Applying $\phi$ to (\ref{p51}) and using the fact $\phi \zeta=0$, we have 
\begin{align}\label{p53}
	-\phi^{2}\nabla_{X}U=\phi A_{N}X.
\end{align}
Then, in view of (\ref{p18}) and (\ref{p53}), we see that 
\begin{align}\label{p54}
	\nabla_{X}U=\phi A_{N}X+\eta(\nabla_{X}U)\zeta+u(\nabla_{X}U)U.
\end{align}
But $\eta(\nabla_{X}U)=g(\nabla_{X}U,\zeta)=-\overline{g}(U,\overline{\nabla}_{X}\zeta)=\overline{g}(U,\overline{\phi}X)=-\theta(X)$, in which we have used (\ref{p4}) and (\ref{f11}). On the other hand, $u(\nabla_{X}U)=g(\nabla_{X}U,V)=\overline{g}(\overline{\nabla}_{X}U,V)=\overline{g}(\overline{\nabla}_{X}\overline{\phi}N,\overline{\phi}\xi)=\overline{g}(\overline{\phi}\,\overline{\nabla}_{X}N,\overline{\phi}\xi)=\overline{g}(\overline{\nabla}_{X}N,\xi)=\tau(X)$, in which we have used (\ref{p2}), (\ref{f11}) and (\ref{flow6}). Thus, putting all this in (\ref{p54}) proves (\ref{p38}) of the lemma. A proof of (\ref{p39}) also follows easily from (\ref{p23}) by putting $Y=V$, which completes the proof.
\end{proof}
\noindent In what follows, we construct a null hypersurface of an indefinite Sasakian manifold, tangent to $\zeta$. To that end, $(\mathbb{R}^{2n+1}_{q},\overline{\phi}_{0},\zeta, \eta, \overline{g})$ will denote the manifold $\mathbb{R}^{2n+1}_{q}$ with its usual Sasakian structure given by 
\begin{align}\nonumber
	&\eta=\frac{1}{2}\left(dz-\sum_{i=1}^{n}y^{i}dx^{i}\right),\;\;\;\; \zeta=2\partial z,\nonumber\\
	\overline{g}= \eta \otimes &\eta+\frac{1}{4}\left(-\sum_{i=1}^{q/2}dx^{i}\otimes dx^{i} +dy^{i}\otimes dy^{i}+\sum_{i=q+1}^{n}dx^{i}\otimes dx^{i} +dy^{i}\otimes dy^{i} \right),\nonumber\\
	\overline{\phi}_{0}&\left ( \sum_{i=1}^{n}(X_{i}\partial x^{i}+Y_{i}\partial y^{i})+Z\partial z\right)=\sum_{i=1}^{n}(Y_{i}\partial x^{i}-X_{i}\partial y^{i})+\sum_{i=1}^{n}Y_{i}y^{i}\partial z,\nonumber
\end{align}
where $(x^{i};y^{i};z)$ are the Cartesian coordinates. The above construction will help in understanding the following example.
\begin{example}\label{example1}
	\rm{
	Let $\overline{M}=(\mathbb{R}_{2}^{7},\overline{g})$	 be a semi-Euclidean space, where $\overline{g}$ is of signature $(-,+,+,-,+,+,+)$ with respect to canonical basis 
	$$
	\{\partial x^{1},\partial x^{2},\partial x^{3},\partial y^{1},\partial y^{2},\partial y^{3},\partial z\}.
	$$  
	Suppose $M$ is a submanifold of $\mathbb{R}_{2}^{7}$ defined by  $x^{1}=y^{3}$. It is easy to see that a local frame of $TM$ is given by 
	\begin{align}
		\xi=2(\partial x^{1}+\partial y^{3}+y^{1}&\partial z),\;\;\;Z_{1}=2(\partial x^{3}-\partial y^{1}+y^{3}\partial z),\nonumber\\
		 Z_{2}=2(\partial x^{2}+y^{2}\partial z),\;\;\; Z_{3}&=2\partial y^{2},\;\;\;\; Z_{4}=\partial x^{3}+\partial y^{1}+y^{3}\partial z,\nonumber\\
		 Z_{6}&=\zeta=2\partial z.\nonumber
	\end{align}
	Hence, $TM^{\perp}=\mathrm{span}\{\xi\}$, $\overline{\phi}_{0}TM^{\perp}=\mathrm{span}\{Z_{1}\}$. Note that $TM^{\perp} \cap \overline{\phi}_{0}TM^{\perp}=\{0\}$.
	Next, $\overline{\phi}_{0}Z_{2}=-Z_{3}$, which implies that $D_{0}=\mathrm{span}\{Z_{2},Z_{3}\}$ is invariant with respect to $\overline{\phi}_{0}$. By a direct calculation we see that $tr(TM)$ is spanned by 
	\begin{align}\nonumber
		N=-\partial x^{1}+\partial y^{3}-y^{1}\partial z,
	\end{align}
	such that $\overline{\phi}_{0}N=Z_{4}$. It is easy to see that the vector fields $U,V$ of (\ref{f11}) are given by $U=-Z_{4}$ and $V=-Z_{1}$. Hence, $M$ is a null hypersurface tangent to $\zeta$.	
}	
\end{example}
\noindent Next, we prove the following non-existence result.
\begin{theorem}\label{main1}
Let $(M,g)$ be a null hypersuface  of an indefinite Sasakian manifold $(\overline{M}, \overline{\phi},\zeta, \eta)$, tangent to the structure vector field $\zeta$. The following are all true;
\begin{enumerate}
	\item $M$ is not totally umbilc or screen totally umbilic.
	\item $M$ is not $\eta$-totally umbilic.
	\item $M$ is not screen conformal.
	\item $M$ is not invariant. 
	\item $S(TM)$ is not parallel.
	\item $\nabla$ is not a metric connection.
	
\end{enumerate}
\end{theorem}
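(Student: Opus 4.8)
The plan is to push each of the six hypotheses through the identities of Lemma~\ref{lemma2}, evaluated on the structure vector field $\zeta$ in one slot and on the canonical vector fields $U,V$ of (\ref{f11}) in the other; in every case a one-line contradiction drops out. So I would first record the elementary scalar values that make these contradictions work. Since $\overline{\phi}\zeta=0$ and (by (\ref{p2})) $\eta(\,\cdot\,)=\overline{g}(\,\cdot\,,\zeta)$, we have $\eta(U)=\eta(V)=0$. Because $\xi$ and $N$ are null with $\overline{g}(\xi,N)=1$ and $\overline{g}(\overline{\phi}\xi,\overline{\phi}N)=1$ by (\ref{p2}), one gets $u(U)=1$ (this is already in (\ref{p18})), $v(V)=g(U,V)=1$, $v(U)=g(U,U)=0$ and $u(V)=g(V,V)=0$. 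Finally $\theta(\xi)=\overline{g}(N,\xi)=1$ while $B(X,\xi)=0$ by (\ref{p10}), and $g(X,\zeta)=\eta(X)$ with $P\zeta=\zeta$ because $\zeta\in\Gamma(S(TM))$, so $U,V,\zeta$ are all legitimate test vectors in $C(\,\cdot\,,P\,\cdot\,)$, $\eta$, etc., by the splitting (\ref{p13}).

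Then the six items go as follows. (1) If $B=\rho g$ then $-u(X)=B(X,\zeta)=\rho\,\eta(X)$ by (\ref{p20}), and $X=U$ gives $-1=\rho\,\eta(U)=0$; if $C(X,PY)=\varrho\,g(X,PY)$ then $-v(X)=C(X,\zeta)=\varrho\,\eta(X)$ by (\ref{p21}), and $X=V$ gives $-1=\varrho\,\eta(V)=0$. (2) If $B(X,Y)=\rho\,g(X,Y)+\sigma\,\eta(X)\eta(Y)$ ($\eta$-totally umbilic), then $B(X,\zeta)=(\rho+\sigma)\eta(X)$, so $-u(X)=(\rho+\sigma)\eta(X)$ and $X=U$ again gives $-1=0$; the same computation handles the $C$-version and every variant of the $\eta$-umbilic condition, since all of them force $B(\,\cdot\,,\zeta)$ (or $C(\,\cdot\,,\zeta)$) to be proportional to $\eta$. (3) If $C(X,PY)=\psi\,B(X,PY)$ with $\psi$ nowhere zero, then $-v(X)=C(X,\zeta)=\psi\,B(X,\zeta)=-\psi\,u(X)$, so $v=\psi u$; at $X=U$ this reads $0=v(U)=\psi\,u(U)=\psi$, contradicting $\psi\neq0$. (4) From (\ref{p17}) together with $\phi U=0$ we get $\overline{\phi}U=u(U)N=N\notin\Gamma(TM)$, so $\overline{\phi}$ does not preserve $TM$ (a fortiori not $S(TM)$), i.e.\ $M$ is not invariant. (5) If $S(TM)$ were parallel then $\nabla_XZ\in\Gamma(S(TM))$ for all $Z\in\Gamma(S(TM))$, i.e.\ $C$ vanishes identically on $TM\times S(TM)$ by (\ref{int3}); but $\zeta\in\Gamma(S(TM))$ and $C(V,\zeta)=-v(V)=-1\neq0$. (6) Setting $Z=\xi$ in (\ref{p40}) and using $B(X,\xi)=0$ and $\theta(\xi)=1$ gives $(\nabla_Xg)(Y,\xi)=B(X,Y)$, which is nonzero at $X=U$, $Y=\zeta$ since $B(U,\zeta)=-u(U)=-1$; hence $\nabla g\neq0$.

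There is no serious analytic obstacle here — every item collapses once one has the values $u(U)=v(V)=1$ and $\eta(U)=\eta(V)=0$, all of which are immediate from the structure and from Lemma~\ref{lemma2}. The only points demanding real care are bookkeeping ones: (i) pinning down the precise definitions of \emph{$\eta$-totally umbilic} and of \emph{invariant} null hypersurface, which are referenced but not spelled out in the excerpt, so I would state them explicitly at the start of the section before invoking them; and (ii) making sure the test sections $U$ and $V$ genuinely lie in $S(TM)$ (and hence may be fed to $C(\,\cdot\,,P\,\cdot\,)$, to $\eta$, and to the $\eta$-umbilic expressions), which is exactly what the decomposition (\ref{p13}) guarantees. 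Once these are settled the proof is the list of one-liners above.
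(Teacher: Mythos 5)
Your proposal is correct and follows essentially the same route as the paper: both rest on Lemma \ref{lemma2} (relations (\ref{p20})--(\ref{p22})) together with the values $u(U)=1$, $v(V)=1$, $\eta(U)=\eta(V)=0$, and obtain each contradiction by evaluating the relevant umbilicity/conformality/parallelism condition on $\zeta$, $U$ and $V$. Your minor shortcuts (e.g.\ $\overline{\phi}U=N\notin\Gamma(TM)$ for non-invariance, and exhibiting $(\nabla_{U}g)(\zeta,\xi)=B(U,\zeta)=-1$ for item (6) instead of reducing it to item (1)) are trivial variants of the paper's steps and equally valid.
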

\begin{proof}
	Suppose that $M$ is totally umbilic in $\overline{M}$. Then, by (\ref{p20}) and (\ref{p7}), we have  $\rho g(X,\zeta)=-u(X)$, for any $X\in \Gamma(TM)$. Setting $X=\zeta$ in this relation gives $\rho g(\zeta,\zeta)=0$, or simply $\rho=0$, since $\zeta$ is a spacelike vector field. This means that $M$ is totally geodesic null hypersurface. It then follows from (\ref{p20}) that $u(X)=0$, for all $X\in \Gamma(TM)$. This is a contradiction as $u(U)=1$, which proves the first assertion of (1). Next, let $M$ be screen totally umbilic. Then (\ref{p8}) and (\ref{p21}) implies that $\varrho g(X,\zeta)=-v(X)$, for any $X\in \Gamma(TM)$. With $X=\zeta$, we have $\varrho=0$ and thus, $v(X)=0$. This is a contradiction since $v(V)=1$, which completes part (1). 
	
	In case $M$ is $\eta$-totally umbilic we see, from (4.20) of \cite[p. 351]{massamba1}, that
	\begin{align}\label{p50}
		h(X,Y)=\lambda \{g(X,Y)-\eta(X)\eta(Y)\}N,
	\end{align} 
 for any $X,Y\in \Gamma(TM)$. Putting $Y=\zeta$ in (\ref{p50}), we get $h(X,\zeta)=0$, for any $X\in \Gamma(TM)$. As $h(X,\zeta)=B(X,\zeta)N$, we notice that $B(X,\zeta)=0$. Thus, (\ref{p20}) gives $u(X)=0$, for all $X\in \Gamma(TM)$, which is a contradiction. This proves (2).
 
 Next, let $M$ be screen conformal. Then, in view of (\ref{p6}) and (\ref{p21}), we have $\psi B(X,\zeta)=-v(X)$. Setting $X=V$ in this relation and using (\ref{p20}), we get $v(V)=0$. Clearly, this is  a contradiction to the fact $v(V)=1$ and therefore, $M$ is never screen conformal, proving part (3). 
	
	Now, assume that $M$ is an invariant null hypersurface. Then, by \cite[p. 27]{massamba2}, $\overline{\phi}X\in TM$, for any $X\in \Gamma(TM)$. Note that this is equivalent to $\overline{\phi}X=\phi X$, for all $X\in \Gamma(TM)$. Therefore, relations (\ref{p4}) and (\ref{int1}) gives $B(X,\zeta)=0$, which,  in view of (\ref{p20}), gives $u(X)=0$. This is a contradiction too. Therefore, $M$ is never invariant, which proves (4). 
	
	To prove (5), we note, from (\ref{int3}), that the screen distribution $S(TM)$ is parallel if and only if $C(X,PY)=0$, for all $X,Y\in \Gamma(TM)$. Let $Y=V$ in this relation and then compare with (\ref{p22}) to get $B(X,U)=C(X,V)=0$, for any $X\in \Gamma(TM)$. Hence, letting $X=\zeta$ in the last relation and compare with (\ref{p20}) gives $u(U)=0$, which is a contradiction. Finally, $\nabla$ is a metric connection if and only if $B=0$ by (\ref{p40}). That is; $M$ is totally geodesic. But this is not possible by part (1). This proves (6) and the theorem is proved.
\end{proof}
\begin{remark}
\rm{
	In \cite{kang}, the authors proves that totally umbilic null hypersurfaces, of indefinite Sasakian space forms $(\overline{M}(c), \overline{\phi},\zeta, \eta)$ and tangent to $\zeta$ exists. Moreover, they show that $c=1$ (also see the book \cite[p. 317]{ds2}). But in view of Theorem \ref{main1} above, such hypersurfaces don't exist. In \cite{massamba1}, the author study the geometry of null hypersurfaces of indefinite Sasakian manifolds, tangent to $\zeta$. Within this paper, the concept of $\eta$-totally umbilic is introduced (see pages 352--352), and ``Theorem 4.6'' regarding such hypersurfaces is proved. Also, in \cite{massamba2}, screen conformal invariant null hypersurfaces are treated. But we have seen, in Theorem \ref{main1}, that  $\eta$-totally umbilic null hypersufaces, screen conformal and invariant null hypersurfaces, tangent to $\zeta$, do not exist in indefinite Sasakian space forms. Therefore, in studying all null hypersurfaces of indefinite Sasakian space forms, which are tangent to $\zeta$,  equations (\ref{p19})--(\ref{p22}) must be treated carefully. Furthermore, the 1-forms $u,v$ in (\ref{p16}) and (\ref{p26}) {\it can not identically vanish} on $M$ as assumed in \cite[p. 28]{massamba2}, since their vanishing affects the normalization condition in (\ref{p50}). }
	\end{remark}
\begin{remark}
\rm{
	In ``Theorem 3.3'' and ``Corollary 1'' of \cite[p. 576--577]{Jin11}, the author proves that there exist no null hypersurfaces of an indefinite Sasakian space form $(\overline{M}(c), \overline{\phi},\zeta, \eta)$ if $c\ne 1$. This contradicts the fact that totally contact umbilic null hypersurfaces exists in $(\overline{M}(c), \overline{\phi},\zeta, \eta)$, with $c=-3$ (see \cite{ds2} and \cite{massamba1}). Here is the cause; first note that, the curvature tensor $R$ of a null submanifold is not Riemannian since the induced connection, $\nabla$, on a null hypersurface $(M,g)$ is not a metric connection (see the expression for $\nabla g$ in (\ref{p40})). This means that $R$  does not enjoy all the symmetries exhibited by Riemannian curvature tensors. In fact, it is easy to show that  $g(R(X,Y)Z,W)=-g(R(Y,X)Z,W)$, but $g(R(X,Y)Z,W)\ne -g(R(X,Y)W,Z)$, for any $X,Y,Z,W\in \Gamma(TM)$. For any null hypersurface $M$, the relation $\overline{R}(X,Y)\xi=R(X,Y)\xi$, for any $X,Y\in \Gamma(TM)$ and $\xi \in \Gamma(TM^{\perp})$, holds (see (3.6) of  \cite[p. 94]{db}). Consequently, 
	\begin{align}\label{p112}
		\overline{g}(\overline{R}(X,Y)\xi,Z)=g(R(X,Y)\xi,Z)\ne  -g(R(X,Y)Z,\xi)=0.
	\end{align}
	 It was assumed in \cite[p. 576]{Jin11} that all signs in (\ref{p112}) are equalities, which  lead to $c=1$, and any further deductions, such  as Corollary 1. For the above reasons, the curvature tensor of a null submanifold must be treated carefully.} 
\end{remark}

\noindent Theorem \ref{main1} shows that most well-known null hypersurfaces do not exist in indefinite Sasakian manifolds. In particular $B$ and $C$ can not be linked by a non-zero conformal factor on the entire null hypersurface $M$. However, such a conformality can be defined partially on $M$. This is the aim of the next section.

\section{Contact screen conformal null hypersurfaces}\label{newclass}
In this section, we introduce two new classes of null hypersurface of indefinite almost contact manifolds $(\overline{M}(c), \overline{\phi},\zeta, \eta)$, tangent to $\zeta$, called; {\it contact screen conformal} and {\it contact screen umbilic}. This is motivated by Theorem \ref{main1}. In particular, the absence of the classical screen conformal and umbilic null hypersurfaces. As this fails mainly in portions of $TM$ containing $\zeta$, we restrict to a region on $TM$ without $\zeta$, and relate $B$ and $C$ via a non-zero conformal factor for the contact screen conformal null hypersurfaces as shown below. From the decomposition (\ref{p14}), we have $TM=D\oplus D'\perp \mathbb{R}\zeta$. Let $\tilde{P}$ be the projection morphism of $TM$ onto $D\oplus D'$. Then, any $X\in \Gamma(TM)$ is represented as $X=\tilde{P}X+\eta(X)\zeta$.  Then, as $B(\zeta,\zeta)=0$ by (\ref{p20}), we have 
\begin{align}\label{p60}
	B(\tilde{P}X,\tilde{P}Y)=B(X,Y)+\eta(Y)u(X)+\eta(X)u(Y),
\end{align}
for any $X,Y\in \Gamma(TM)$. Next, since $\phi \zeta=0$, we see, from (\ref{int3}), that $C(\zeta,\zeta)=0$. Hence, using (\ref{p21}), we derive 
\begin{align}\label{p61}
	C(\tilde{P}X,\tilde{P}PY)=C(X,PY)+\eta(Y)v(X)-\eta(X)\omega(Y),
\end{align}
where $\omega$ is a 1-form on $M$ defined as 
\begin{align}\label{p62}
	\omega(X)=C(\zeta,PX),\;\;\;X\in \Gamma(TM).
\end{align}
In view of (\ref{int3}), (\ref{p19}), (\ref{p20}),  (\ref{p22}) and (\ref{p62}), we have 
\begin{align}\label{p65}
	\omega(\zeta)=0\;\;\;\;\;\mbox{and}\;\;\;\;\;\omega(V)=-1.
\end{align}

\noindent Next, we define {\it contact screen conformal} null hypersurfaces.

\begin{definition}
\rm {
Let $(M,g)$ be a null hypersurface of an indefinite Sasakian manifold $(\overline{M}, \overline{\phi},\zeta, \eta)$, tangent to the structure vector field $\zeta$.  Then, $M$ will be called {\it contact screen conformal} null hypersurface if there exists a non-zero smooth function $\varphi$ on a neighborhood $\mathcal{U}\subset M$ such that 
\begin{align}\label{p65}
	C(\tilde{P}X,\tilde{P}Y)=\varphi B(\tilde{P}X,\tilde{P}Y),\;\;\;\;\forall\, X,Y\in \Gamma(TM).
\end{align}
 Equivalently, using (\ref{p60}) and (\ref{p61}), $M$ is contact screen conformal if 
\begin{align}\label{p66}
		C(X,PY)=\varphi \{B&(X,Y)+\eta(Y)u(X)+\eta(X)u(Y)\}\nonumber\\
		&-\eta(Y)v(X)+\eta(X)\omega(Y),
	\end{align}
	for all $X,Y\in \Gamma(TM)$. We say that $M$ is {\it contact screen homothetic}
	 if $\varphi$ is a conatant function.}
\end{definition}
\noindent As an example, we have the following.
\begin{example}
\rm{
	 Consider the null hypersurface in Example \ref{example1}. By (\ref{p4}) and (\ref{int1}), we have $B(X,\zeta)=-u(X)$, for all $X\in \Gamma(TM)$.  By a direct calculation, we have $[\xi, V]=-4\zeta$, and $[\xi,X]=0$, for all other $X\in \Gamma(TM)$. Thus, using Koszul formula \cite[p. 61]{oneil}, we have $\overline{g}(\overline{\nabla}_{X}Y,\xi)=0$, for all $X,Y\in \Gamma(D\oplus D')$	. Hence, $B=0$ on $D\oplus D'$. On the other hand, $[N,U]=-\zeta$ and $[N,X]=0$ for any other $X\in \Gamma(TM)$. Also, using Koszul formula, we have 	$\overline{g}(\overline{\nabla}_{X}Y,N)=0$, for any $X,Y\in \Gamma(D\oplus D')$. Thus, $C=0$ on $D\oplus D'$. Note that, as $[\zeta,X]=0$ for all $X\in \Gamma(TM)$, we have $C(\zeta, X)=C(X,\zeta)=-v(X)$. Clearly, $(M,g)$ is a contact screen conformal null hypersurface with $\varphi$ an arbitrary function.}
\end{example}\label{example2}
\noindent Next, let $(M,g)$ be a null hypersurface of an indefinite Sasakian space-form. Then, using (\ref{p25}), (\ref{v34}), (\ref{v35}), (\ref{p16}) and (\ref{p26}), we have, for any $X,Y,Z\in \Gamma(TM)$,
\begin{align}\label{p27}
	(\nabla_{X}&B)(Y,Z)-(\nabla_{Y}B)(X,Z)=\tau(Y)B(X,Z)-\tau(X)B(Y,Z)\nonumber\\
	&+\frac{(c-1)}{4}\{\overline{g}(\overline{\phi}Y,Z)u(X) -\overline{g}(\overline{\phi}X,Z)u(Y)-2\overline{g}(\overline{\phi}X,Y)u(Z)\},
\end {align}
and also 
\begin{align}
	(\nabla_{X}C)(Y,PZ)-(\nabla_{Y}C)&(X,PZ)=\tau(X)C(Y,PZ)-\tau(Y)C(X,PZ)\nonumber\\
	&+\frac{(c+3)}{4}\{ \overline{g}(Y,PZ)\theta(X)-\overline{g}(X,PZ)\theta(Y)\}\nonumber\\
	&+\frac{(c-1)}{4}\{\eta(X)\eta(PZ)\theta(Y)-\eta(Y)\eta(PZ)\theta(X)\nonumber\\
	&+\overline{g}(\overline{\phi}Y,PZ)v(X)-\overline{g}(\overline{\phi}X,PZ)v(Y)\nonumber\\
	&-2\overline{g}(\overline{\phi}X,Y)v(PZ)\}.\label{p31}
\end{align}
\noindent In what follows, we prove the following result.
\begin{theorem}\label{main2}
	Let $(M,g)$ be a contact screen conformal null hypersurface of an indefinite Sasakian space form $(\overline{M}(c), \overline{\phi},\zeta, \eta)$. Then, $c=-3$ (that is; $\overline{M}$ is a space of constant $\overline{\phi}$-sectional curvature $-3$) and $\omega(U)=C(\zeta,U)=0$. Moreover, if $B(U,V)$ is nonzero, then $\varphi$ satisfies the differential equation $\xi \varphi-2\varphi\tau(\xi)=0$.
\end{theorem}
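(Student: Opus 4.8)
The plan is to combine the two Codazzi-type identities (\ref{p27}) and (\ref{p31}) with the defining relation (\ref{p66}) of a contact screen conformal hypersurface, and then test the resulting identity against a few carefully chosen vector fields.

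First I would substitute (\ref{p66}) into the screen Codazzi equation (\ref{p31}). Writing $C(X,PY)=\varphi B(X,Y)+G(X,Y)$ with $G(X,Y)=\varphi\{\eta(Y)u(X)+\eta(X)u(Y)\}-\eta(Y)v(X)+\eta(X)\omega(Y)$, and expanding each occurrence of $\nabla C$ by means of (\ref{p101}) and the Leibniz rule, every $C$ gets replaced by $\varphi B$ plus correction terms. This forces me to compute the covariant derivatives of the one-forms $\eta,u,v$ and $\omega$: these follow from Lemma \ref{lemma2}, Lemma \ref{lema1} and the non-metricity formula (\ref{p40}) — in particular $\nabla_X\zeta=-\phi X$, and, after using (\ref{p30}), $(\nabla_Xu)(Y)=-B(X,\phi Y)-\tau(X)u(Y)$ with an analogous expression for $\nabla v$, while $\nabla\omega$ is obtained by re-applying (\ref{p31}) with $\zeta$ in the first slot and invoking the definition (\ref{p62}). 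In the expansion one also meets the combination $\varphi\{(\nabla_XB)(Y,Z)-(\nabla_YB)(X,Z)\}$, which I eliminate via the Codazzi identity (\ref{p27}). Collecting all terms yields a single master identity, valid for all $X,Y,Z\in\Gamma(TM)$, in which the curvature contribution is $\tfrac{c+3}{4}$ times a screen-metric expression plus $\tfrac{c-1}{4}$ times a $\overline{\phi}$-expression, while the remaining terms involve only $\varphi$, $d\varphi$, $\tau$, $B$ and the one-forms $\eta,\theta,u,v,\omega$.

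To pin down $c$, I would evaluate the master identity on $X=\xi$ and $Y,Z\in\Gamma(D_{0})$. Since $B(\xi,\cdot)=0$ by (\ref{p10}) (hence also $C(\xi,PY)=0$ along the screen, by (\ref{p66})), and since $\eta,\theta,u,v$ all vanish on $D_{0}$ while $\theta(\xi)=1$, a careful evaluation shows that the whole $\tfrac{c-1}{4}$-contribution and every $d\varphi$- and one-form-correction drop out or cancel against each other, leaving $\tfrac{c+3}{4}\,g(Y,Z)=0$ on the nondegenerate distribution $D_{0}$; choosing $Y=Z$ non-null forces $c=-3$. Substituting $c=-3$ (so $c-1=-4$, $c+3=0$) back in simplifies the master identity considerably. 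Evaluating the simplified identity on a suitable choice such as $X=\zeta$, $Y=U$, and using $\omega(\zeta)=0$, $\omega(V)=-1$, the relation $B(X,U)=C(X,V)$ of (\ref{p22}) and $C(X,\zeta)=-v(X)$ of (\ref{p21}), yields $\omega(U)=C(\zeta,U)=0$. Finally, evaluating the simplified identity on $X=\xi$, $Y=U$, $Z=V$: the $\varphi B$-terms sitting in a slot containing $\xi$ vanish by (\ref{p10}), the correction terms are controlled by $\omega(U)=0$ together with the known values of $u,v,\eta,\theta$ at $U,V,\xi$, and the factor $2$ appearing because the $\tau$-terms coming from (\ref{p27}) and from the right side of (\ref{p31}) add rather than cancel; what survives is $(\xi\varphi)B(U,V)-2\varphi\tau(\xi)B(U,V)=0$, which for $B(U,V)\neq0$ is exactly $\xi\varphi-2\varphi\tau(\xi)=0$.

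The main obstacle is the first step: assembling the master identity requires computing $\nabla\omega$ (which re-invokes the screen Codazzi equation) and then carrying out the many cancellations among the correction terms produced by differentiating the products $\eta\otimes u$, $u\otimes\eta$, $\eta\otimes v$ and $\eta\otimes\omega$ in (\ref{p66}). Once that identity is in hand, each of the four conclusions follows from the single substitution indicated above.
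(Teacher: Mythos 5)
Your overall strategy is the paper's: substitute the contact screen conformal relation (\ref{p66}) into the screen Codazzi equation (\ref{p31}), eliminate $\varphi\{(\nabla_XB)(Y,Z)-(\nabla_YB)(X,Z)\}$ via (\ref{p27}), assemble a master identity, and then set $X=\xi$ and test special arguments. The final step is also sound: once $c=-3$ is known, the evaluation at $X=\xi$, $Y=U$, $Z=V$ does reduce to $\{\xi\varphi-2\varphi\tau(\xi)\}B(U,V)=0$, giving the stated ODE when $B(U,V)\neq 0$.

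The genuine gap is your derivation of $c=-3$. Evaluating the master identity at $X=\xi$ with $Y,Z\in\Gamma(D_0)$ does \emph{not} leave only $\tfrac{c+3}{4}g(Y,Z)=0$: while $\eta,\theta,u,v,\omega$ all vanish on $D_0$ and the $\tfrac{c-1}{4}$-terms drop, the terms $(\xi\varphi)B(Y,Z)$ and $-2\varphi\tau(\xi)B(Y,Z)$ survive, so what you actually obtain is
\begin{align}\nonumber
\{\xi\varphi-2\varphi\tau(\xi)\}\,B(Y,Z)=\frac{c+3}{4}\,g(Y,Z),\qquad Y,Z\in\Gamma(D_0),
\end{align}
which is exactly the restriction of the paper's identity (\ref{p78}) to $D_0$. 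Since $B$ restricted to $D_0$ need not vanish, and $\xi\varphi-2\varphi\tau(\xi)=0$ is not available at this stage (it is a \emph{conclusion}, and only under the hypothesis $B(U,V)\neq 0$), choosing $Y=Z$ non-null does not force $c=-3$; your argument is circular as written. The working choice is to test the identity (\ref{p78}) on the pair $U,V$ in \emph{both orders}: $(Y,Z)=(V,U)$ gives $\{\xi\varphi-2\varphi\tau(\xi)\}B(V,U)-\tfrac12(c+3)=0$ while $(Y,Z)=(U,V)$ gives $\{\xi\varphi-2\varphi\tau(\xi)\}B(U,V)-\tfrac34(c+3)=0$; since $B$ is symmetric, subtracting forces $c=-3$ (and then $\{\xi\varphi-2\varphi\tau(\xi)\}B(U,V)=0$). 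The asymmetry that produces the two different coefficients comes precisely from the one-forms $u,v,\omega$, which vanish identically on $D_0$ — so no choice of arguments inside $D_0$ can detect it. Your route to $\omega(U)=0$ (evaluation at $X=\zeta$, $Y=U$) is also unverified; the paper instead takes $Y=Z=U$ in (\ref{p78}) and uses $B(U,U)=\varphi B(V,U)$ from (\ref{p22}) and (\ref{p66}) together with $\{\xi\varphi-2\varphi\tau(\xi)\}B(V,U)=0$, which requires the $(V,U)$/$(U,V)$ computation you skipped.
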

\begin{proof}
	From (\ref{p66}), (\ref{flow7}), (\ref{p40}), (\ref{p19}) and  (\ref{p20}), we derive 
	\begin{align}\label{p70}
		(\nabla_{X}C)&(Y,PZ)=(X\varphi)\{B(Y,Z)+\eta(Z)u(Y)+\eta(Y)u(Z)\}\nonumber\\
		&+\varphi\{(\nabla_{X}B)(Y,Z)-g(Z,\phi X)u(Y)+B(X,V)\eta(Z)\theta(Y)\nonumber\\
		&+\eta(Z)g(\nabla_{X}V,Y)-u(X)u(Z)\theta(Y)-g(Y,\phi X)u(Z)\nonumber\\
		&+\eta(Y)g(\nabla_{X}V,Z)\}+g(Z,\phi X)v(Y)-B(X,U)\eta(Z)\theta(Y)\nonumber\\
		&-\eta(Z)g(\nabla_{X}U,Y)-u(X)\theta(Y)\omega(Z)-g(Y,\phi X)\omega(Z)\nonumber\\
		&+\eta(Y)X\omega(Z)-\eta(Y)\omega(\nabla^{*}_{X}PZ),\;\;\;\forall\, X,Y,Z\in \Gamma(TM).
	\end{align}
	Interchanging $X$ and $Y$ in (\ref{p70}) and subtracting the two relations gives 
	\begin{align}\label{p73}
		(\nabla_{X}C)&(Y,PZ)-(\nabla_{Y}C)(X,PZ)\nonumber\\
		&=	(X\varphi)\{B(Y,Z)+\eta(Z)u(Y)+\eta(Y)u(Z)\}-(Y\varphi)\{B(X,Z)\nonumber\\
		&+\eta(Z)u(X)+\eta(X)u(Z)\}+\varphi\{(\nabla_{X}B)(Y,Z)-(\nabla_{Y}B)(X,Z)\nonumber\\
		&-g(Z,\phi X)u(Y)+g(Z,\phi Y)u(X)+B(X,V)\theta(Y)\eta(Z)\nonumber\\
		&-B(Y,V)\theta(X)\eta(Z)+\eta(Z)g(\nabla_{X}V,Y)-\eta(Z)g(\nabla_{Y}V,X)\nonumber\\
		&-u(X)\theta(Y)u(Z)+u(Y)\theta(X)u(Z)-g(Y,\phi X)u(Z)\nonumber\\
		&+g(X,\phi Y)u(Z)+\eta(Y)g(\nabla_{X}V,Z)-\eta(X)g(\nabla_{Y}V,Z)\}\nonumber\\
		&+g(Z,\phi X)v(Y)-g(Z,\phi Y)v(X)-B(X,U)\theta(Y)\eta(Z)\nonumber\\
		&+B(Y,U)\theta(X)\eta(Z)+\eta(Z)g(\nabla_{Y}U,X)-\eta(Z)g(\nabla_{X}U,Y)\nonumber\\
		&-u(X)\theta(Y)\omega(Z)+u(Y)\theta(X)\omega(Z)-g(Y,\phi X)\omega(Z)\nonumber\\
		&+g(X,\phi Y)\omega(Z)+\eta(Y)X\omega(Z)-\eta(X)Y\omega(Z)\nonumber\\
		&-\eta(Y)\omega(\nabla^{*}_{X}PZ)+\eta(X)\omega(\nabla_{Y}^{*}PZ).
	\end{align}
	In view of Lemma \ref{lema1} and  (\ref{p30}), we simplify (\ref{p73}) to 
	\begin{align}\label{p74}
		(\nabla_{X}C)&(Y,PZ)-(\nabla_{Y}C)(X,PZ)\nonumber\\
		&=	(X\varphi)\{B(Y,Z)+\eta(Z)u(Y)+\eta(Y)u(Z)\}-(Y\varphi)\{B(X,Z)\nonumber\\
		&+\eta(Z)u(X)+\eta(X)u(Z)\}+\varphi\{(\nabla_{X}B)(Y,Z)-(\nabla_{Y}B)(X,Z)\nonumber\\
		&-g(Z,\phi X)u(Y)+g(Z,\phi Y)u(X)+\eta(Z)B(Y,\phi X)\nonumber\\
		&-\eta(Z)B(X,\phi Y)+\eta(Z)u(X)\tau(Y)-\eta(Z)u(X)\tau(X)\nonumber\\
		&-u(X)\theta(Y)u(Z)+u(Y)\theta(X)u(Z)-g(Y,\phi X)u(Z)\nonumber\\
		&+g(X,\phi Y)u(Z)-\eta(Y)\theta(Z)B(X,V)-\eta(Y)B(X,\phi Z)\nonumber\\
		&-\eta(Y)u(Z)\tau(X)+\eta(X)\theta(Z)B(Y,V)+\eta(X)B(Y,\phi Z)\nonumber\\
		&+\eta(X)u(Z)\tau(Y)\}+g(Z,\phi X)v(Y)-g(Z,\phi Y)v(X)\nonumber\\
		&-B(X,U)\theta(Y)\eta(Z)+B(Y,U)\theta(X)\eta(Z)+\eta(Z)\theta(Y)C(X,V)\nonumber\\
		&-\eta(Z)\theta(X)C(Y,V)+\eta(Z)C(X,P\phi Y)-\eta(Z)C(Y,P\phi X)\nonumber\\
		&+\eta(Z)\eta(Y)\theta(X)-\eta(Z)\eta(X)\theta(Y)+\eta(Z)v(X)\tau(Y)\nonumber\\
		&-\eta(Z)v(Y)\tau(X)-u(X)\theta(Y)\omega(Z)+u(Y)\theta(X)\omega(Z)\nonumber\\
		&-g(Y,\phi X)\omega(Z)+g(X,\phi Y)\omega(Z)+\eta(Y)X\omega(Z)\nonumber\\
		&-\eta(X)Y\omega(Z)-\eta(Y)\omega(\nabla^{*}_{X}PZ)+\eta(X)\omega(\nabla_{Y}^{*}PZ).
	\end{align}
	Then, putting (\ref{p27}), (\ref{p31}), (\ref{p74}) and  (\ref{p66}), we get 
	\begin{align}\label{p75}
		&(X\varphi)\{B(Y,Z)+\eta(Z)u(Y)+\eta(Y)u(Z)\}-(Y\varphi)\{B(X,Z)\nonumber\\
		&+\eta(Z)u(X)+\eta(X)u(Z)\}+\varphi\{2B(X,Z)\tau(Y)-2B(Y,Z)\tau(X)\nonumber\\
		&+ ((c-1)/4)\{ \overline{g}(\overline{\phi}Y,Z)u(X) -\overline{g}(\overline{\phi}X,Z)u(Y)-2\overline{g}(\overline{\phi}X,Y)u(Z)\} \nonumber\\
		&-\eta(Z)u(Y)\tau(X)-2\eta(Y)u(Z)\tau(X)+\eta(Z)u(X)\tau(Y)\nonumber\\
		&+2\eta(X)u(Z)\tau(Y)-g(Z,\phi X)u(Y)+g(Z,\phi Y)u(X)+\eta(Z)B(Y,\phi X)\nonumber\\
		&-\eta(Z)B(X,\phi Y)+\eta(Z)u(X)\tau(Y)-\eta(Z)u(X)\tau(X)\nonumber\\
		&-u(X)\theta(Y)u(Z)+u(Y)\theta(X)u(Z)-g(Y,\phi X)u(Z)\nonumber\\
		&+g(X,\phi Y)u(Z)-\eta(Y)\theta(Z)B(X,V)-\eta(Y)B(X,\phi Z)\nonumber\\
		&+\eta(X)\theta(Z)B(Y,V)+\eta(X)B(Y,\phi Z)+\eta(Z)\theta(Y)B(X,V)\nonumber\\
		&-\eta(X)\theta(X)B(Y,V)+\eta(Z)\eta(Y)\theta(X)+\eta(Z)B(X,\phi Y)\nonumber\\
		&-\eta(Z)B(Y,\phi X)\}+g(Z,\phi X)v(Y)-g(Z,\phi Y)v(X)\nonumber\\
		&-\eta(Y)\theta(Y)\eta(X)-B(X,U)\theta(Y)\eta(Z)+B(Y,U)\theta(X)\eta(Z)\nonumber\\
		&+\eta(Z)\eta(Y)\theta(X)-\eta(Z)\eta(X)\theta(Y)+\eta(Z)v(X)\tau(Y)\nonumber\\
		&-\eta(Z)v(Y)\tau(X)-u(X)\theta(Y)\omega(Z)+u(Y)\theta(X)\omega(Z)\nonumber\\
		&-g(Y,\phi X)\omega(Z)+g(X,\phi Y)\omega(Z)+\eta(Y)X\omega(Z)\nonumber\\
		&-\eta(X)Y\omega(Z)-\eta(Y)\omega(\nabla^{*}_{X}PZ)+\eta(X)\omega(\nabla_{Y}^{*}PZ)\nonumber\\
		&+\eta(Z)\eta(X)\omega(\phi Y)-\eta(Z)\eta(Y)\omega(\phi X)-\eta(Y)\omega(Z)\tau(X)\nonumber\\
		&+\eta(X)\omega(Z)\tau(Y)-((c+3)/4)\{ \overline{g}(Y,PZ)\theta(X)-\overline{g}(X,PZ)\theta(Y)\}\nonumber\\
		&-((c-1)/4)\{\eta(X)\eta(PZ)\theta(Y)-\eta(Y)\eta(PZ)\theta(X)\nonumber\\
		&+\overline{g}(\overline{\phi}Y,PZ)v(X)-\overline{g}(\overline{\phi}X,PZ)v(Y)-2\overline{g}(\overline{\phi}X,Y)v(PZ)\}=0,	
	\end{align}
	for any $X,Y,Z\in \Gamma(TM)$. Next, setting $X=\xi$ in (\ref{p75}), we get 
	\begin{align}\label{p78}
		(\xi\varphi)&\{B(Y,Z)+\eta(Z)u(Y)+\eta(Y)u(Z)\}-\varphi\{2B(Y,Z)\tau(\xi)\nonumber\\
		&+(3(c-1)/4)u(Y)u(Z)-\eta(Z)u(Y)\tau(\xi)-2\eta(Y)u(Z)\tau(\xi)\nonumber\\
		&+3u(Y)u(Z)+\eta(Y)\eta(Z)\}-u(Z)v(Y)+B(Y,U)\eta(Y)\nonumber\\
		&-\eta(Z)v(Y)\tau(\xi)+2u(Y)\omega(Z)+\eta(Y)\xi \omega(Z)\nonumber\\
		&-\eta(Y)\omega(\nabla^{*}_{\xi}PZ)-\eta(Y)\omega(Z)\tau(\xi)-((c+3)/4)g(Y,PZ)\nonumber\\
		&-((c-1)/4)\{u(PZ)v(Y)+2u(Y)v(PZ)\}=0.
	\end{align}
	Putting $Y=V$ and $Z=U$ in (\ref{p78}) gives 
	\begin{align}\label{p79}
		\{\xi \varphi-2\varphi \tau(\xi)\}B(V,U)-\frac{1}{2}(c+3)=0.
	\end{align}
On the other hand, putting $Y=U$ and $Z=V$ in (\ref{p78}) gives 
\begin{align}\label{p80}
	\{\xi \varphi-2\varphi \tau(\xi)\}B(U,V)-\frac{3}{4}(c+3)=0.
\end{align}
Also, putting $Y=Z=U$ in (\ref{p78}), we get 
\begin{align}\label{p81}
	\{\xi \varphi-2\varphi \tau(\xi)\}B(U,U)+\frac{3}{4}\varphi(c+3)+2\omega(U)=0.
\end{align}
Finally, putting $Y=Z=V$	 leads to 
\begin{align}\label{p82}
	\{\xi \varphi-2\varphi \tau(\xi)\}B(V,V)=0.
\end{align} 
	From (\ref{p79}) and (\ref{p80}), we get 
	\begin{align}\label{p83}
		c=-3\;\;\;\;\mbox{and}\;\;\;\;\{\xi \varphi-2\varphi \tau(\xi)\}B(V,U)=0	.\end{align}
 As $M$ is contact screen conformal, (\ref{p22}) and (\ref{p66}) gives $B(U,U)=\varphi B(V,U)$. Therefore, considering this relation in (\ref{p81}) together with (\ref{p83}), we get $\omega(U)=0$. Note that, by (\ref{p22}) and (\ref{p66}), we have $B(V,U)= \varphi B(V,V)$. Therefore, if $B(V,U)\ne 0$, then $B(U,U)\ne 0$ and $B(V,V)\ne 0$, which all implies that $\xi \varphi-2\varphi\tau(\xi)=0$. This completes the proof.
\end{proof}
\noindent As a consequence of Theorem \ref{main2}, we have the following.
\begin{corollary}
	There exist no contact screen conformal null hypersurfaces of an indefinite Sasakian space form $(\overline{M}(c\ne -3), \overline{\phi},\zeta, \eta)$ with $\zeta\in \Gamma(TM)$.
\end{corollary}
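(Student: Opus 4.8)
The plan is to obtain this Corollary as an immediate logical consequence of Theorem $\ref{main2}$, by contraposition, so that essentially no new computation is required.

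First I would recall that, by the Definition of a \emph{contact screen conformal} null hypersurface given above, that very notion presupposes that $M$ is a null hypersurface of the indefinite Sasakian manifold which is tangent to the structure vector field $\zeta$, i.e. $\zeta\in\Gamma(TM)$. Consequently the full machinery of Section $\ref{nonexi}$ — the decomposition $(\ref{p14})$, the vector fields $U,V$ of $(\ref{f11})$, and Lemmas $\ref{lemma2}$ and $\ref{lema1}$ — is available, and the hypotheses of Theorem $\ref{main2}$ are met by \emph{any} contact screen conformal null hypersurface of an indefinite Sasakian space form $\overline{M}(c)$, regardless of the value of $c$.

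Then I would argue by contradiction: suppose $(M,g)$ were a contact screen conformal null hypersurface of some indefinite Sasakian space form $\overline{M}(c)$ with $c\ne -3$ and $\zeta\in\Gamma(TM)$. Applying Theorem $\ref{main2}$ to $(M,g)$ forces $c=-3$, contradicting $c\ne -3$. Hence no such hypersurface can exist, which is exactly the assertion of the Corollary. Equivalently, Theorem $\ref{main2}$ states that $c=-3$ is a \emph{necessary} condition for the existence of a contact screen conformal null hypersurface of a Sasakian space form, and the Corollary is simply the contrapositive of this implication.

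I do not expect any genuine obstacle here. The only point deserving a line of care is the normalization convention: one must read $c$ throughout with the same sign convention for the $\overline{\phi}$-sectional curvature as in $(\ref{p25})$, under which $\mathbb{R}^{2n+1}_{q}$ with its standard Sasakian structure has $c=-3$, so that the value "$c=-3$" produced by Theorem $\ref{main2}$ and the restriction "$c\ne -3$" in the Corollary refer to the same quantity. With that understood, the deduction is a single line. It may also be worth remarking, although it is not needed for the Corollary, that the case $c=-3$ is genuinely realized — for instance by the null hypersurface of the preceding Example inside the standard model — so that the dichotomy in Theorem $\ref{main2}$, and hence the restriction $c\ne -3$ in this Corollary, is sharp.
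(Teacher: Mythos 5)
Your proposal is correct and matches the paper exactly: the paper states this corollary as an immediate consequence of Theorem \ref{main2}, which is precisely your contrapositive argument. No further comment is needed.
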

\begin{example}
	\rm{
	In view of Example \ref{example2}, both $B$ and $C$ vanishes on $D\oplus D'$. Hence, $(M,g)$ is a contact screen conformal null hypersurface with $\varphi$ arbitrary,  and $\mathbb{R}_{2}^{7}$ is a space of constant $\overline{\phi}$-sectional curvature $-3$. 	
	
	}
\end{example}
\begin{remark}
\rm{
	Geometrically, the proportionality of  $B$ to the metric tensor $g$ on $D\oplus D'$ is equivalent to $(M,g)$ being totally contact umbilic in $\overline{M}$. In fact, let $B=\lambda \otimes g$ on $D\oplus D'$, where $\lambda$ is a smooth function on $\mathcal{U}\subset M$. Then, by (\ref{p60}), we have $\lambda g(\tilde{P}X,\tilde{P}Y)=B(X,Y)+\eta(Y)u(X)+\eta(X)u(Y)$, for any $X,Y\in \Gamma(TM)$. Thus, using $X=\tilde{P}X+\eta(X)\xi$ to this relation, we get
	\begin{align}\label{p87}
		B(X,Y)=\lambda\{g(X,Y)-\eta(X)\eta(Y)\}-\eta(Y)u(X)-\eta(X)u(Y).
	\end{align}
	Clearly, (\ref{p87}) is the defining condition for $(M,g)$ to be totally contact umbilic  \cite{ds2} null hypersurface. It has already been established that for such a null hypersurface in an indefinite Sasakian space form $(\overline{M}(c), \overline{\phi},\zeta, \eta)$, then $c=-3$ (see \cite{massamba1} for details). On the other hand, we know that $C$ is generally not symmetric on $S(TM)$. In fact, by (\ref{int3}), we have $C(X,Y)-C(Y,X)=\theta([X,Y])$, for any $X,Y\in \Gamma(S(TM))$. Thus, $C$ is symmetric if and only if $S(TM)$ is an integrable distribution on $M$. For this reason, if the screen local second fundamental form $C$  is  proportional to $g$ on $D\oplus D'$, we get it symmetric on $D\oplus D'$ but not on $S(TM)$, and therefore, we do not recover the equation for a totally contact screen umbilic null hypersurface (see \cite{massamba1}) for which it is assumed to be symmetric on $S(TM)$. In fact, if $C=\gamma\otimes g$ on $D\otimes D'$, we see, from (\ref{p61}), that 
	\begin{align}\label{p90}
		C(X,PY)=\gamma\{g(X,Y)-\eta(X)\eta(Y)\}-\eta(Y)v(X)+\eta(X)\omega(Y),
	\end{align}
 for any $X,Y\in \Gamma(TM)$. It follows from (\ref{p90}) and (\ref{p21}) that; if $C$ is symmetric on $S(TM)$ then $\omega(Y)=C(\zeta, Y)=C(Y,\zeta)=-v(Y)$, for any $Y\in \Gamma(S(TM))$, and thus the relation of totally contact screen umbilic is recovered. In such a case, it has been proved that $c=-3$ and, in fact, $(M,g)$ is totally contact screen geodesic, i.e., $\gamma=0$ (see \cite{massamba1}). Therefore, in general, the proportionality of  $C$ to $g$ on $D\oplus D'$ does not imply totally contact screen umbilic considered in \cite{massamba1}. Based on the above, we have the following general definition of a {\it contact screen umbilic} null hypersurfaces.}
\end{remark}
\begin{definition}
\rm {
Let $(M,g)$ be a null hypersurface of an indefinite Sasakian manifold $(\overline{M}, \overline{\phi},\zeta, \eta)$, tangent to the structure vector field $\zeta$.  Then, $M$ will be called {\it contact screen umbilic} null hypersurface if there exists a non-zero smooth function $\gamma$ on a neighborhood $\mathcal{U}\subset M$ such that 
\begin{align}\label{p92}
	C(\tilde{P}X,\tilde{P}PY)=\gamma g(\tilde{P}X,\tilde{P}Y),\;\;\;\;\forall\, X,Y\in \Gamma(TM).
\end{align}
 Equivalently, using (\ref{p61}) and (\ref{p62}), $M$ is contact screen umbilic if 
\begin{align}\label{p91}
		C(X,PY)=\gamma\{g(X,Y)-\eta(X)\eta(Y)\}-\eta(Y)v(X)+\eta(X)\omega(Y),
	\end{align}
	for all $X,Y\in \Gamma(TM)$. We say that $(M,g)$ is {\it contact screen geodesic} if $\gamma=0$.
	}
\end{definition}
\noindent Accordingly, we have the following result. 
\begin{theorem}\label{main3}
	Let $(M,g)$ be a contact screen umbilic null hypersurface of an indefinite Sasakian space form $(\overline{M}(c), \overline{\phi},\zeta, \eta)$. Then, $c=-3$ (that is; $\overline{M}$ is a space of constant $\overline{\phi}$-sectional curvature $-3$). Moreover, $\gamma$ satisfies the relations;
	\begin{align}\nonumber
		\gamma^{2}-2\omega(U)=0,\;\;\;\gamma B(V,V)=0\;\;\; \mbox{and}\;\;\;\xi \gamma-\gamma\tau(\xi)=0.
 	\end{align}
	Furthermore, if $C$	 is symmetric on $S(TM)$, then $M$ is contact screen geodesic.
\end{theorem}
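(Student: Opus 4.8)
The plan is to mirror the argument used for Theorem~\ref{main2}, replacing the contact screen conformal condition (\ref{p66}) by the contact screen umbilic condition (\ref{p91}). First I would differentiate (\ref{p91}) covariantly: using the definition (\ref{p101}) of $\nabla C$ together with (\ref{p40}) for $\nabla g$, (\ref{p19}) for $\nabla_X\zeta$, (\ref{p39}) for $\nabla_X V$, (\ref{p20}) for $B(X,\zeta)$, and (\ref{f11})--(\ref{p30}) for the ambient $1$-forms, one obtains an explicit expression for $(\nabla_X C)(Y,PZ)$ in terms of $X\gamma$, $\gamma$, $B$, $\phi$ and the $1$-forms $u,v,\theta,\eta,\omega$, the analogue of (\ref{p70}). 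Antisymmetrising in $X$ and $Y$ and simplifying via Lemma~\ref{lema1} and (\ref{p30}) then yields a formula for $(\nabla_X C)(Y,PZ)-(\nabla_Y C)(X,PZ)$ parallel to (\ref{p74}).

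Next I would bring in the ambient curvature. Equating this antisymmetrised expression with the space-form screen Codazzi identity (\ref{p31}) (and invoking the $B$-Codazzi identity (\ref{p27}) should any $(\nabla B)$-terms survive), and eliminating the remaining occurrences of $C$ on the right through (\ref{p91}), produces a single identity valid for all $X,Y,Z\in\Gamma(TM)$, in direct analogy with (\ref{p75}). Setting $X=\xi$ and invoking $B(\cdot,\xi)=0$ from (\ref{p10}) collapses this to a tractable relation, the counterpart of (\ref{p78}).

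The conclusions are then read off by feeding frame vectors into this relation, using $g(U,U)=g(V,V)=0$, $g(U,V)=1$ (from (\ref{p2}) and (\ref{p5})) and the consequence $B(U,U)=C(U,V)=\gamma$ of (\ref{p22}) and the $\tilde P$-form (\ref{p92}) of the umbilic condition. The substitutions $(Y,Z)=(V,U)$ and $(Y,Z)=(U,V)$ give two equations whose coefficients of $(c+3)$ are unequal, forcing $c=-3$, exactly as (\ref{p79})--(\ref{p80}) did; the substitution $(Y,Z)=(V,V)$ gives $\gamma B(V,V)=0$; the terms carrying $\xi\gamma$ assemble into $\xi\gamma-\gamma\tau(\xi)=0$; and the substitution $(Y,Z)=(U,U)$, after using $B(U,U)=\gamma$ together with the previous two identities, yields $\gamma^{2}-2\omega(U)=0$. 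Finally, if $C$ is symmetric on $S(TM)$ then, as noted in the remark preceding the theorem, $\omega(U)=C(\zeta,U)=C(U,\zeta)=-v(U)=-g(U,U)=0$ by (\ref{p21}); substituting this into $\gamma^{2}-2\omega(U)=0$ forces $\gamma=0$, i.e.\ $M$ is contact screen geodesic.

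I expect the principal obstacle to be bookkeeping rather than conceptual: as in (\ref{p70})--(\ref{p75}), the covariant differentiation and antisymmetrisation generate a large number of terms, and the $\eta$-weighted pieces must be tracked with care so that the correct scalar relations survive the frame substitutions and so that no spurious $\zeta$-components are lost. Beyond Lemmas~\ref{lemma2} and~\ref{lema1}, the space-form curvature identity (\ref{p25}) and the Gauss--Codazzi equations, no new geometric input is required.
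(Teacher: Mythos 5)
Your proposal is correct and follows essentially the same route as the paper's proof: covariantly differentiate the umbilic condition (\ref{p91}) using (\ref{p40}), (\ref{p19})--(\ref{p21}) and Lemma \ref{lema1}, antisymmetrise, match against the screen Codazzi identity (\ref{p31}), set $X=\xi$, and read off the relations from the substitutions $(Y,Z)=(V,U),(U,V),(U,U),(V,V)$ together with $B(U,U)=C(U,V)=\gamma$ and, for the last claim, $\omega(U)=-v(U)=0$ when $C$ is symmetric. (The only cosmetic difference is that (\ref{p27}) is never actually needed, as you anticipated, since the umbilic hypothesis produces $\nabla g$ rather than $\nabla B$ terms.)
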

\begin{proof}
	Using (\ref{p91}), (\ref{p19}), (\ref{p20}), (\ref{p21}), (\ref{p40}), we derive 
	\begin{align}\label{p103}
		(\nabla_{X}&C)(Y,PZ)-(\nabla_{Y}C)(X,PZ)=(X\gamma)\{g(Y,Z)-\eta(Y)\eta(Z)\}\nonumber\\
		&-(Y\gamma)\{g(X,Z)-\eta(X)\eta(Z)\}+\gamma \{B(X,PZ)\theta(Y)-B(Y,PZ)\theta(X)\nonumber\\
		&+u(X)\theta(Y)\eta(Z)-u(Y)\theta(X)\eta(Z)+g(Y,\phi X)\eta(Z)-g(X,\phi Y)\eta(Z)\nonumber\\
		&+\eta(Y)g(Z,\phi X)-\eta(X)g(Z,\phi Y)\}+g(Z,\phi X)v(Y)-g(Z,\phi Y)v(X)\nonumber\\
		&+\eta(Z)\theta(X)B(Y,U)-\eta(Z)\theta(Y)B(X,U)+\eta(Z)g(\nabla_{Y}U,X)\nonumber\\
		&-\eta(Z)g(\nabla_{X}U,Y)+u(Y)\theta(X)\omega(Z)-u(X)\theta(Y)\omega(Z)\nonumber\\
		&+g(X,\phi Y)\omega(Z)-g(Y,\phi X)\omega(Z)+\eta(Y)X\omega(Z)-\eta(X)Y\omega(Z)\nonumber\\
		&+\eta(X)\omega(\nabla_{Y}PZ)-\eta(Y)\omega(\nabla_{X}PZ),
	\end{align}
	for all $X,Y,Z\in \Gamma(TM)$. Substituting (\ref{p103}) in (\ref{p31}) and using (\ref{p91}), we get 
	\begin{align}\label{p106}
		(X&\gamma)\{g(Y,Z)-\eta(Y)\eta(Z)\}-(Y\gamma)\{g(X,Z)-\eta(X)\eta(Z)\}\nonumber\\
		&+\gamma \{B(X,PZ)\theta(Y)-B(Y,PZ)\theta(X)+g(X,PX)\tau(Y)-g(Y,PZ)\tau(X)\nonumber\\
		&+u(X)\theta(Y)\eta(Z)-u(Y)\theta(X)\eta(Z)+g(Y,\phi X)\eta(Z)-g(X,\phi Y)\eta(Z)\nonumber\\
		&+\eta(Y)g(Z,\phi X)-\eta(X)g(Z,\phi Y)+\eta(Y)\eta(Z)\tau(X)-\eta(X)\eta(Z)\tau(Y)\}\nonumber\\
		&+g(Z,\phi X)v(Y)-g(Z,\phi Y)v(X)+\eta(Z)\theta(X)B(Y,U)\nonumber\\
		&-\eta(Z)\theta(Y)B(X,U)+\eta(Z)g(\nabla_{Y}U,X)-\eta(Z)g(\nabla_{X}U,Y)\nonumber\\
		&+u(Y)\theta(X)\omega(Z)-u(X)\theta(Y)\omega(Z)+g(X,\phi Y)\omega(Z)\nonumber\\
		&-g(Y,\phi X)\omega(Z)+\eta(Y)X\omega(Z)-\eta(X)Y\omega(Z)+\eta(X)\omega(\nabla_{Y}PZ)\nonumber\\
		&-\eta(Y)\omega(\nabla_{X}PZ)+\eta(Z)v(Y)\tau(X)-\eta(Z)v(X)\tau(Y)\nonumber\\
		&+\eta(X)\omega(Z)\tau(Y)-\eta(Y)\omega(Z)\tau(X)-((c+3)/4)\{ \overline{g}(Y,PZ)\theta(X)\nonumber\\
		&-\overline{g}(X,PZ)\theta(Y)\}-((c-1)/4)\{\eta(X)\eta(PZ)\theta(Y)-\eta(Y)\eta(PZ)\theta(X)\nonumber\\
		&+\overline{g}(\overline{\phi}Y,PZ)v(X)-\overline{g}(\overline{\phi}X,PZ)v(Y)-2\overline{g}(\overline{\phi}X,Y)v(PZ)\}=0.	
	\end{align}
	Letting $X=\xi$ in (\ref{p106}) leads to 
	\begin{align}\label{p107}
		&(\xi\gamma)\{g(Y,Z)-\eta(Y)\eta(Z)\}+\gamma \{-B(Y,PZ)-g(Y,PZ)\tau(\xi)-u(Y)\eta(Z)\nonumber\\
		&-\eta(Y)u(Z)+\eta(Y)\eta(Z)\tau(\xi)\}-u(Z)v(Y)+\eta(Z)B(Y,U)-\eta(Z)g(\nabla_{\xi}U,Y)\nonumber\\
		&+2u(Y)\omega(Z)+\eta(Y)\xi \omega(Z)-\eta(Y)\omega(\nabla_{\xi}PZ)+\eta(Z)v(Y)\tau(\xi)-\eta(Y)\omega(Z)\tau(\xi)\nonumber\\
		&-((c+3)/4)g(Y,PZ)-((c-1)/4)\{-\eta(Y)\eta(PZ)+u(PZ)v(Y)\nonumber\\
		&+2u(Y)v(PZ)\}=0,
	\end{align}
	for any $Y,Z\in \Gamma(TM)$. Setting $Y=Z=U$ in (\ref{p107}) gives $-\gamma B(U,U)+2\omega(U)=0$. But using (\ref{p22}) and (\ref{p91}), we have $B(U,U)=\gamma$. Thus, 
	\begin{align}\label{p110}
		-\gamma^{2}+2\omega(U)=0.
	\end{align}
	 On the other hand, putting $Y=V$ and $Z=U$ in (\ref{p107}), leads to 
	\begin{align}\label{p109}
		\xi \gamma-\gamma \tau(\xi)-\frac{1}{2}(c+3)=0.
	\end{align}
	With $Y=U$ and $Z=V$, (\ref{p107}) gives 
	\begin{align}\label{p108}
		\xi \gamma-\gamma \tau(\xi)-\frac{3}{4}(c+3)=0.
	\end{align}
	From (\ref{p108}) and (\ref{p109}), we have $c=-3$ and $\xi \gamma-\gamma \tau(\xi)=0$. Note that $Y=Z=V$ gives $\gamma B(V,V)=0$. When $C$ is symmetric, we see from (\ref{p21}) that $\omega(U)=C(\zeta,U)=C(U,\zeta)=-v(U)=0$. Thus, (\ref{p110}) gives $-\gamma^{2}=0$ or simply $\gamma=0$, which shows that $(M,g)$ is contact screen geodesic. Hence, the theorem is proved.
\end{proof}
	\noindent From Theorem \ref{main3}, the following hold.
	\begin{corollary}
		There exist no contact screen umbilic null hypersurfaces of an indefinite Sasakian space form $(\overline{M}(c\ne -3), \overline{\phi},\zeta, \eta)$ with $\zeta\in \Gamma(TM)$.
	\end{corollary}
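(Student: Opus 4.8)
The corollary is a direct contrapositive reading of Theorem \ref{main3}, so my plan is the one-line argument by contradiction, exactly parallel to how the corollary following Theorem \ref{main2} is obtained from that theorem. Assume, towards a contradiction, that there does exist a contact screen umbilic null hypersurface $(M,g)$ of an indefinite Sasakian space form $(\overline{M}(c), \overline{\phi},\zeta, \eta)$ with $\zeta \in \Gamma(TM)$ and $c \neq -3$. Since tangency to the structure vector field $\zeta$ is built into the definition of a contact screen umbilic null hypersurface, $(M,g)$ satisfies all the hypotheses of Theorem \ref{main3}.

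Invoking Theorem \ref{main3} for $(M,g)$ then yields $c = -3$, contradicting $c \neq -3$. Therefore no contact screen umbilic null hypersurface of $\overline{M}(c)$ with $\zeta \in \Gamma(TM)$ exists when $c \neq -3$, which is precisely the claim.

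I expect no real obstacle here: all the work has already been done inside the proof of Theorem \ref{main3}, where the specialisations of the master identity (\ref{p106}) at $X=\xi$ with $(Y,Z)=(V,U)$ and $(Y,Z)=(U,V)$ give the relations (\ref{p109}) and (\ref{p108}), whose difference forces $(c+3)=0$. The only point worth stating explicitly is that the definition of contact screen umbilic already presupposes $\zeta \in \Gamma(TM)$, so no additional assumption is required in order to apply Theorem \ref{main3}; one could in principle also give a self-contained derivation by re-running those two substitutions, but that would merely reproduce the relevant portion of the proof of Theorem \ref{main3}.
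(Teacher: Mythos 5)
Your proposal is correct and is exactly the paper's argument: the corollary is stated as an immediate consequence of Theorem \ref{main3}, whose conclusion $c=-3$ contradicts the assumption $c\ne -3$. Nothing further is needed.
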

\begin{example}
\rm {
	The null hypersurface of Example \ref{example2} is contact screen geodesic, i.e., $\gamma=0$, and
	$\mathbb{R}_{2}^{7}$ is a space of constant $\overline{\phi}$-sectional curvature $-3$.} 
\end{example}


\end{document}